\DeclareMathOperator{\As}{As}
\DeclareMathOperator{\Dias}{Dias}
\DeclareMathOperator{\Hom}{Hom}
\DeclareMathOperator{\Inf}{Inf}
\DeclareMathOperator{\Tra}{Tra}
\DeclareMathOperator{\Res}{Res}
\DeclareMathOperator{\ima}{Im}
\newcommand{\N}{\mathbb{N}}
\newcommand{\F}{\mathbb{F}}
\newcommand{\cZ}{\mathcal{Z}}
\newcommand{\cB}{\mathcal{B}}
\newcommand{\cH}{\mathcal{H}}
\newcommand{\vv}{_{\vdash}}
\newcommand{\dd}{_{\dashv}}
\newtheorem{thm}{Theorem}[section]
\newtheorem{lem}[thm]{Lemma}
\newtheorem{prop}[thm]{Proposition}
\newtheorem{cor}[thm]{Corollary}
\theoremstyle{definition}
\newtheorem{defn}{Definition}
\theoremstyle{remark}
\newtheorem*{rem}{Remark}
\title{Multipliers of Nilpotent Diassociative Algebras}
\author{Erik Mainellis}
\date{}
\begin{document}

\maketitle

\begin{abstract}
    The paper concerns nilpotent associative dialgebras and their corresponding diassociative Schur multipliers. Using Lie (and group) theory as a guide, we first extend a classic five-term cohomological sequence under alternative conditions in the nilpotent setting. This main result is then applied to obtain a new proof for a previous extension of the same sequence. It also yields a different extension of the sequence that involves terms in the upper central series. Furthermore, we use the main result to obtain a collection of dimension bounds on the multiplier of a nilpotent diassociative algebra. These differ notably from the Lie case. Since diassociative algebras generalize associative algebras, we obtain an associative analogue of the results herein. We conclude by computing both the associative and diassociative multipliers of an associative algebra. This paper is part of an ongoing project to advance extension theory in the context of several Loday algebras.
\end{abstract}

\section{Introduction}
In 1904, Schur introduced his multiplier in the context of group representation theory \cite{schur}. A comprehensive investigation of \textit{the Schur multiplier} can be found in Karpilovsky's book of the same name \cite{karp}. Beyond the context of groups, notions of multipliers and their surrounding theory have since been developed for Lie algebras \cite{batten}, for Leibniz algebras \cite{mainellis batten, rogers}, and, most recently, for diassociative algebras \cite{mainellis batten di, mainellis perfect}. The aim of the present paper is to investigate the multipliers of nilpotent diassociative algebras. Diassociative algebras, also known as associative dialgebras, were introduced by Loday in \cite{loday dialgebras}. Arising in the context of algebraic $K$-theory, they have since been found to have connections with algebraic topology, among other fields. Formally, and over a field $\F$, a \textit{diassociative algebra} $L$ is an $\F$-vector space equipped with two associative bilinear products, denoted $\dashv$ and $\vdash$, that satisfy
\begin{align}
    x\dashv (y\dashv z) = x\dashv (y\vdash z), \\
    (x\vdash y)\dashv z = x\vdash (y\dashv z), \\
    (x\dashv y)\vdash z = (x\vdash y)\vdash z~
\end{align}
for all $x,y,z\in L$. Part of the motivation for studying the multiplier of these algebras is their classification. In Lie theory, for example, classes of nilpotent algebras have been characterized by an invariant related to the dimension of their multiplier (see \cite{hardy early, hardy}). We refer the reader to \cite{basri} and the references therein for some classification of diassociative algebras and their connections with other classes of Loday algebras. It is remarkable that the theory contained in the present paper holds in the diassociative setting, despite these algebras having a significantly different structure than Lie algebras.

Given a diassociative algebra $L$, its multiplier is defined as follows.

\begin{defn}
A \textit{definining pair} $(K,M)$ of $L$ is itself a pair of diassociative algebras that satisfies $K/M\cong L$ and $M\subseteq Z(K)\cap K'$. Such a pair is called a \textit{maximal defining pair} if the dimension of $K$ is maximal. In this case, we say that $K$ is a \textit{cover} of $L$ and that $M$ is the \textit{multiplier} of $L$, denoted by $M(L)$.
\end{defn}

The necessary theory for this paper has been established in \cite{mainellis batten di}. Therein, it is shown that the multiplier of a diassociative algebra is characterized by its second cohomology group with coefficients in the field, i.e. that $M(L)\cong \cH^2(L,\F)$. Moreover, the following exact five-term cohomological sequence\footnote{Based on the central extension $0\xrightarrow{} Z\xrightarrow{} L\xrightarrow{} L/Z\xrightarrow{} 0$.} \[0\xrightarrow{} \Hom(L/Z,\F)\xrightarrow{\Inf_1} \Hom(L,\F)\xrightarrow{\Res} \Hom(Z,\F)\xrightarrow{\Tra} \cH^2(L/Z,\F)\xrightarrow{\Inf_2} \cH^2(L,\F)\] is obtained, where $Z$ is a central ideal of $L$ and $\F$ is seen as a central $L$-module. The sequence is also extended by $\delta:\cH^2(L,\F)\xrightarrow{} (L/L'\otimes Z \oplus Z\otimes L/L')^2$.

The present paper is structured as follows. Following (primarily) a similar methodology to \cite{yankosky}, we begin by extending the aforementioned five-sequence under alternative conditions. This easily yields a new proof for the original extension in the nilpotent case. As further applications, we obtain a handful of dimension bounds on the multiplier of a nilpotent diassociative algebra as well as another extension \[\cH^2(L/L^n,\F)\xrightarrow{\Inf} \cH^2(L,\F) \xrightarrow{\delta} (L/Z_{n-1}\otimes L^n\oplus L^n\otimes Z_{n-1})^2\] of the five sequence, where $Z_j$ denotes the $j$th term in the upper cental series of $L$, $L^j$ denotes the $j$th term in the lower central series of $L$, and $L$ is nilpotent of class $n$. Notably, diassociative algebras generalize associative algebras, and we thus obtain associative analogues of these results. It is particularly interesting to consider an associative algebra and compare its multiplier \textit{as} an associative algebra to its multiplier \textit{as} a diassociative algebra. Such a phenomenon has been explored in the context of Lie and Leibniz multipliers \cite{rogers}. We compute an example that highlights the associative to diassociative comparison as well as a couple of our dimension bounds.

\section{Preliminaries}
We use the following definitions throughout. Given a diassociative algebra $L$ with multiplications $\dashv$ and $\vdash$, a subspace $A$ of $L$ is a \textit{subalgebra} if $a\dashv b$, $a\vdash b\in A$ for all $a,b\in A$. A subalgebra $I$ of $L$ is called an \textit{ideal} if $x\dashv i$, $x\vdash i$, $i\dashv x$, $i\vdash x\in I$ for all $i\in I$ and $x\in L$. The \textit{center} $Z(L)$ of $L$ is the ideal consisting of all elements $z\in L$ such that $z\dashv x = z\vdash x = x\dashv z = x\vdash z = 0$ for all $x\in L$. Consider two subalgebras $A$ and $B$ in $L$. We denote $A\lozenge B = A\dashv B + A\vdash B$ and say that $L$ is \textit{perfect} if $L= L'$, where $L' = L\lozenge L$.

The definition of nilpotency for diassociative algebras is naturally more complex than that of algebras with a single multiplication, although it is also more flexible in certain ways. As with associative algebras, there is no non-nilpotent solvable diassociative algebra. We begin with the following sequences of ideals for a diassociative algebra $L$, as constructed in \cite{basri}, that are shown to be equivalent notions in the same work. We state this latter fact as a lemma.

\begin{enumerate}
    \item[i.] $L^{<1>} = L$, $L^{<n+1>} = L^{<n>}\lozenge L$,
    \item[ii.] $L^{\{1\}} = L$, $L^{\{n+1\}} = L\lozenge L^{\{n\}}$,
    \item[iii.] $L^1 = L$, $L^{n+1} = L^1\lozenge L^n + L^2\lozenge L^{n-1} + \cdots + L^n\lozenge L^1$.
\end{enumerate}

\begin{lem}\label{nilpotent equivalence}
For a diassociative algebra $L$, one has $L^n = L^{\{n\}} = L^{<n>}$ for all $n\in \N$.
\end{lem}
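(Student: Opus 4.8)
The plan is to prove the equalities by establishing the two inclusions $L^{<n>} \subseteq L^n$ and $L^n \subseteq L^{<n>}$ (together with their analogues for $L^{\{n\}}$) separately. The forward inclusions are immediate: since $L^n \lozenge L^1$ and $L^1 \lozenge L^n$ each occur as a single summand in the definition of $L^{n+1}$, an easy induction gives $L^{<n+1>} = L^{<n>} \lozenge L \subseteq L^n \lozenge L^1 \subseteq L^{n+1}$, and likewise $L^{\{n+1\}} = L \lozenge L^{\{n\}} \subseteq L^1 \lozenge L^n \subseteq L^{n+1}$.

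The substance lies in the reverse inclusions, for which I would first isolate the following containment as a standalone lemma, proved by induction and logically independent of the $L^n$ series:
\[ L^{<i>} \lozenge L^{<j>} \subseteq L^{<i+j>} \quad \text{for all } i,j \ge 1. \]
Fixing $i$ and inducting on $j$, the base case $j=1$ is the definition $L^{<i>} \lozenge L = L^{<i+1>}$. For the step I would expand $L^{<i>} \lozenge L^{<j+1>} = L^{<i>} \lozenge (L^{<j>} \lozenge L)$ into its four constituent products of the shape $x * (y * z)$ with $x \in L^{<i>}$, $y \in L^{<j>}$, $z \in L$, and each $* \in \{\dashv, \vdash\}$, then use associativity of the two products together with the defining identities $x \dashv (y \vdash z) = x \dashv (y \dashv z)$ and $x \vdash (y \dashv z) = (x \vdash y) \dashv z$ to rewrite every such term in the right-nested form $(x * y) * z$. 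Each resulting term then lies in $(L^{<i>} \lozenge L^{<j>}) \lozenge L \subseteq L^{<i+j>} \lozenge L = L^{<i+j+1>}$ by the inductive hypothesis, which is exactly what is needed.

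With this lemma in hand, I would conclude by strong induction on $n$. Assuming $L^k = L^{<k>}$ for all $k \le n$, each summand of $L^{n+1} = \sum_{i=1}^n L^i \lozenge L^{n+1-i}$ satisfies $L^i \lozenge L^{n+1-i} = L^{<i>} \lozenge L^{<n+1-i>} \subseteq L^{<n+1>}$ by the lemma, so $L^{n+1} \subseteq L^{<n+1>}$; combined with the forward inclusion this yields $L^{n+1} = L^{<n+1>}$. The argument for $L^{\{n\}}$ is entirely symmetric, proving $L^{\{i\}} \lozenge L^{\{j\}} \subseteq L^{\{i+j\}}$ by pulling out the \emph{left} factor instead, which is where the third defining identity $(x \dashv y) \vdash z = (x \vdash y) \vdash z$ enters.

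The main obstacle is the standalone containment lemma, and specifically the bracket-rearrangement step: the whole point is that the four a priori distinct products $x \dashv (y \dashv z)$, $x \dashv (y \vdash z)$, $x \vdash (y \dashv z)$, and $x \vdash (y \vdash z)$ all collapse onto right-nested (respectively left-nested) monomials, and it is precisely the three diassociative identities that force this collapse. Once that bookkeeping is carried out correctly, the surrounding inductions are routine.
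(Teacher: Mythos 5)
Your proposal is mathematically correct, but there is nothing in the paper to compare it against: the paper states this lemma without proof, attributing it to the Rakhimov--Rikhsiboev--Basri reference \cite{basri}, so your argument supplies a self-contained proof where the paper offers only a citation. The structure of your proof is sound. The forward inclusions are indeed immediate by induction; the key containment $L^{<i>}\lozenge L^{<j>}\subseteq L^{<i+j>}$ is exactly the right intermediate statement; and your verification of the rebracketing step checks out: $x\dashv(y\dashv z)=(x\dashv y)\dashv z$ and $x\vdash(y\vdash z)=(x\vdash y)\vdash z$ by associativity, $x\dashv(y\vdash z)=x\dashv(y\dashv z)$ by axiom (1), and $x\vdash(y\dashv z)=(x\vdash y)\dashv z$ by axiom (2), so all four monomials land in $(L^{<i>}\lozenge L^{<j>})\lozenge L\subseteq L^{<i+j>}\lozenge L=L^{<i+j+1>}$; symmetrically, axiom (3) together with axiom (2) and associativity handles the series $L^{\{n\}}$, as you say. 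This is the same mechanism (rebracketing products via the diassociative identities plus associativity) that the paper invokes, without spelling it out, at the step marked $(*)$ in its proof of Lemma \ref{upper central lemma}, so your proof is very much in the spirit of the paper's toolkit. Two small quibbles: your terminology is reversed --- $(x*y)*z$ is conventionally called \emph{left}-nested, not right-nested (and correspondingly for the $L^{\{n\}}$ case) --- and in the final strong induction you should say explicitly that both indices $i$ and $n+1-i$ of each summand $L^i\lozenge L^{n+1-i}$ are at most $n$, which is what licenses applying the inductive hypothesis to both factors. Neither affects correctness.
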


We now narrow our focus to $L^n$, which we will define to be the $n$th term for the \textit{lower central series} of $L$, as the most natural-feeling collection of all $n$ products, although we will still use the flexibility afforded by Lemma \ref{nilpotent equivalence} in this paper. A diassociative algebra $L$ that satisfies $L^n\neq 0$ and $L^{n+1}=0$ is called \textit{nilpotent of class $n$}. If $L$ is nilpotent of class $n$, it induces a central extension $0\xrightarrow{} L^n\xrightarrow{} L\xrightarrow{} L/L^n\xrightarrow{} 0$. We also define the \textit{upper central series} of $L$ by $Z_1 = Z(L)$ and \[Z_{j+1} = \{x\in L~|~ \forall l\in L,~ x\dashv l, x\vdash l, l\dashv x, l\vdash x\in Z_{j}\}\] for $j\geq 1$. In particular, for a nilpotent diassociative algebra $L$ of class $n$, one has $Z_n = L$, and thus $L'\subseteq Z_{n-1}$.

\begin{lem}\label{upper central lemma}
Let $L$ be a diassociative algebra and $Z_j$ denote the $j$th term in the upper central series of $L$. Then \[L^s\lozenge Z_i + Z_i\lozenge L^s\subseteq Z_{i-s}\] for all $i\geq s$.
\end{lem}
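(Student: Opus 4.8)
The plan is to induct on $s$, with the base case $s=1$ coming essentially for free from the definition of the upper central series. For $i\geq 2$ and $z\in Z_i = Z_{(i-1)+1}$, the products $z\dashv l$, $z\vdash l$, $l\dashv z$, $l\vdash z$ all lie in $Z_{i-1}$ by construction, so $L\lozenge Z_i + Z_i\lozenge L\subseteq Z_{i-1}$; for $i=1$ one has $L\lozenge Z_1 + Z_1\lozenge L = 0$ since $Z_1 = Z(L)$. To make the boundary case $i=s$ meaningful I would adopt the convention $Z_0 = 0$, which is consistent with the centrality of $Z_1$.

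For the inductive step I would assume $L^s\lozenge Z_j + Z_j\lozenge L^s\subseteq Z_{j-s}$ for all $j\geq s$ and treat the index $s+1$ with $i\geq s+1$. The key flexibility is that Lemma \ref{nilpotent equivalence} lets me present $L^{s+1}$ in whichever form is convenient: to bound $L^{s+1}\lozenge Z_i$ I write $L^{s+1} = L^{s}\lozenge L$, so a spanning element has the shape $(m\star_1 l)\star_2 z$ with $m\in L^s$, $l\in L$, $z\in Z_i$, and $\star_1,\star_2\in\{\dashv,\vdash\}$. I would dispatch the four combinations by regrouping every such element into the form $m\star(l\star' z)$ using associativity of each product together with the three defining identities. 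For instance, $(m\dashv l)\dashv z = m\dashv(l\dashv z)$ by associativity; $(m\dashv l)\vdash z = (m\vdash l)\vdash z = m\vdash(l\vdash z)$ using the identity $(x\dashv y)\vdash z = (x\vdash y)\vdash z$ and associativity; and $(m\vdash l)\dashv z = m\vdash(l\dashv z)$ using the identity $(x\vdash y)\dashv z = x\vdash(y\dashv z)$. In every case the inner factor $l\star' z$ lies in $L\lozenge Z_i\subseteq Z_{i-1}$ by the base case, so the whole element lies in $L^s\lozenge Z_{i-1}\subseteq Z_{i-1-s} = Z_{i-(s+1)}$ by the inductive hypothesis.

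The containment $Z_i\lozenge L^{s+1}\subseteq Z_{i-(s+1)}$ is handled symmetrically by writing $L^{s+1} = L\lozenge L^s$, so a spanning element is $z\star_2(l\star_1 m)$, and I would again run through the four cases, this time regrouping as $(z\star' l)\star m$; here the relevant moves are associativity together with the identities $x\dashv(y\dashv z) = x\dashv(y\vdash z)$ and $x\vdash(y\dashv z) = (x\vdash y)\dashv z$. Each inner factor $z\star' l$ lies in $Z_i\lozenge L\subseteq Z_{i-1}$, so the element lies in $Z_{i-1}\lozenge L^s\subseteq Z_{i-1-s}$ by the inductive hypothesis. I expect the only real care required is the bookkeeping across these two sets of four cases, namely verifying that each diassociative identity (and plain associativity) really does let one peel a single factor of $L$ onto the central element and thereby drop the upper-central index by exactly one; no idea beyond this regrouping appears to be needed.
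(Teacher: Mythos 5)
Your proof is correct and takes essentially the same approach as the paper's: induction on $s$, using Lemma \ref{nilpotent equivalence} to pick the convenient factorization of $L^{s+1}$ for each term, and regrouping via associativity together with the diassociative identities. The only (immaterial) difference is the order of the two steps: you peel a single factor of $L$ onto the central element and apply the definition before the inductive hypothesis, whereas the paper regroups the other way, e.g.\ $(L\lozenge L^s)\lozenge Z_i = L\lozenge(L^s\lozenge Z_i)$, applying the inductive hypothesis first and the definition last.
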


\begin{proof}
We proceed by induction on $s$. For the base case $s=1$, one has $L\lozenge Z_i + Z_i\lozenge L\subseteq Z_{i-1}$ by definition. Now assume that the statement holds for some $s\geq 1$. We compute \begin{align*}
    L^{s+1}\lozenge Z_i + Z_i\lozenge L^{s+1} &= (L\lozenge L^s)\lozenge Z_i + Z_i\lozenge (L^s\lozenge L) & \text{by Lemma \ref{nilpotent equivalence}} \\ &= L\lozenge (L^s\lozenge Z_i) + (Z_i\lozenge L^s)\lozenge L & \text{$(*)$} \\ &\subseteq L\lozenge Z_{i-s} + Z_{i-s}\lozenge L & \text{by induction}\\ &\subseteq Z_{i-s-1} & \text{by definition}
\end{align*} where $(*)$ follows via the diassociative identities of $L$ and the associativity of its multiplications.
\end{proof}

As discussed in \cite{mainellis batten di}, given a pair of diassociative algebras $A$ and $B$, and a central extension $0\xrightarrow{} A\xrightarrow{} L\xrightarrow{} B\xrightarrow{} 0$ of $A$ by $B$, a 2-cocycle $(f\dd,f\vv)\in \cZ^2(B,A)$ is a pair of bilinear forms $f\dd,f\vv:B\times B\xrightarrow{} A$ that satisfy
\begin{enumerate}
    \item[C1.] $f\dd(i,j\dashv k) = f\dd(i,j\vdash k)$,
    \item[C2.] $f\dd(i\vdash j,k) = f\vv(i,j\dashv k)$,
    \item[C3.] $f\vv(i\dashv j,k) = f\vv(i\vdash j,k)$,
    \item[C4.] $f\dd(i,j\dashv k) = f\dd(i\dashv j,k)$,
    \item[C5.] $f\vv(i,j\vdash k) = f\vv(i\vdash j,k)$
\end{enumerate} for $i,j,k\in B$. Throughout, we will refer to C1 - C5 as the diassociative cocycle conditions or cocycle identities. See \cite{mainellis factor systems} for more on 2-cocycles and their relations with extension theory.

\section{The Main Result}
Throughout, the form $(X\otimes Y\oplus Y\otimes X)^2$ will denote $(X\otimes Y\oplus Y\otimes X) \oplus (X\otimes Y\oplus Y\otimes X)$. This is not to be confused with the terms $L^n$ in the lower central series of an algebra $L$. We will use $L'$ to denote the second term $L^2$ in the lower central series.

\begin{thm}\label{yankosky 2.1}
Let $L$ be a nilpotent diassociative algebra and let $A$ and $B$ be ideals in $L$ such that $L'\subseteq A$ and $B\subseteq Z(L)$. If $f\dd(A,B)=0$, $f\dd(B,A) = 0$, $f\vv(A,B)=0$, and $f\vv(B,A) = 0$ for all $(f\dd,f\vv)\in \cZ^2(L,\F)$, then there exists a homomorphism $\delta$ such that \[\cH^2(L/B,\F) \xrightarrow{\Inf} \cH^2(L,\F)\xrightarrow{\delta} (L/A\otimes B\oplus B\otimes L/A)^2\] is exact.
\end{thm}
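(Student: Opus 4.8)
The plan is to realize $\delta$ as a restriction-and-descent map and then establish exactness by direct computation. Given a class $[(f\dd,f\vv)]\in\cH^2(L,\F)$, I would send it to the four restricted bilinear forms $f\dd|_{L\times B}$, $f\dd|_{B\times L}$, $f\vv|_{L\times B}$, $f\vv|_{B\times L}$. The standing hypotheses $f\dd(A,B)=f\dd(B,A)=f\vv(A,B)=f\vv(B,A)=0$ are exactly what is needed so that each of these forms vanishes whenever its $L$-argument lies in $A$; hence each descends along the projection $L\to L/A$ to a form on $L/A\times B$ or $B\times L/A$. Viewing such a form as an element of the dual $(L/A\otimes B)^*$ or $(B\otimes L/A)^*$, and identifying these duals with $L/A\otimes B$ and $B\otimes L/A$ in the finite-dimensional (nilpotent) setting, I obtain the value of $\delta$ in $(L/A\otimes B\oplus B\otimes L/A)^2$. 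This assignment is clearly linear in the cocycle.

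Before anything else I would check that $\delta$ descends to cohomology, i.e.\ that it annihilates $\cB^2(L,\F)$. Since the coefficients are central, a coboundary has the form $((x,y)\mapsto\lambda(x\dashv y),\,(x,y)\mapsto\lambda(x\vdash y))$ (up to sign) for some $\lambda\in\Hom(L,\F)$. Because $B\subseteq Z(L)$, every product having an argument in $B$ is zero, so a coboundary restricts to the zero form on each of $L\times B$ and $B\times L$. Thus $\delta$ is well defined on $\cH^2(L,\F)$, and in particular the four restricted forms do not depend on the chosen representative. The inclusion $\ima(\Inf)\subseteq\ker(\delta)$ is then immediate: an inflated cocycle is $(x,y)\mapsto f(\bar x,\bar y)$ for the projection $\bar\cdot\colon L\to L/B$, and since $\bar b=0$ for $b\in B$, all four of its restrictions vanish.

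The reverse inclusion $\ker(\delta)\subseteq\ima(\Inf)$ is the heart of the argument. If $\delta([f])=0$, then each descended form on $L/A\times B$ (resp.\ $B\times L/A$) is zero; as $L\to L/A$ is surjective, this forces $f\dd(l,b)=f\dd(b,l)=f\vv(l,b)=f\vv(b,l)=0$ for all $l\in L$, $b\in B$. A bilinear form that vanishes whenever one argument lies in the ideal $B$ factors through $L/B$, so $f\dd$ and $f\vv$ induce forms $\hat f\dd,\hat f\vv$ on $L/B$. I would then confirm that $(\hat f\dd,\hat f\vv)\in\cZ^2(L/B,\F)$ by checking C1--C5: each identity for the induced pair evaluates products in $L/B$, which are images of the corresponding products in $L$, so it reduces verbatim to the identity already satisfied by $(f\dd,f\vv)$. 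By construction $\Inf[\,\hat f\,]=[f]$, which gives exactness.

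The main obstacle is less any single calculation than the simultaneous bookkeeping in the well-definedness step: the vanishing hypotheses are used to push the restricted forms down to $L/A$ (so that $\delta$ lands in the stated codomain), while the centrality of $B$ is used to kill coboundaries (so that $\delta$ is representative-independent). Once these two roles are cleanly separated, the kernel computation rests on the equivalence ``descended form is zero $\iff$ original restriction is zero,'' which is precisely the condition allowing descent to $L/B$; transferring C1--C5 across the quotient is then routine.
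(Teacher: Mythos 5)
Your proposal is correct and follows essentially the same route as the paper's proof: define $\delta$ by restricting the cocycle to $L\times B$ and $B\times L$, use the vanishing hypotheses to descend to $L/A$, use $B\subseteq Z(L)$ to kill coboundaries, and prove $\ker\delta\subseteq\ima(\Inf)$ by factoring the cocycle through $L/B$ and transferring C1--C5 to the quotient. The only cosmetic difference is your explicit dual-space identification $(L/A\otimes B)^*\cong L/A\otimes B$, which the paper leaves implicit.
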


\begin{proof}
Consider elements $x\in L$, $b\in B$, and $(f\dd',f\vv')\in \cZ^2(L,\F)$. We define the bilinear forms \begin{align*}
    f\dd'':L/A\times B\xrightarrow{} \F, && g\dd'':B\times L/A\xrightarrow{} \F, \\ f\vv'':L/A\times B\xrightarrow{} \F, && g\vv'':B\times L/A\xrightarrow{} \F
\end{align*} by \begin{align*}
    f\dd''(x+A,b) = f\dd'(x,b), && g\dd''(b,x+A) = f\dd'(b,x), \\ f\vv''(x+A,b) = f\vv'(x,b), && g\vv''(b,x+A) = f\vv'(b,x).
\end{align*} Since $f\dd(A,B)=0$, $f\dd(B,A) = 0$, $f\vv(A,B)=0$, and $f\vv(B,A) = 0$, all of these maps are well-defined. We define $\delta':\cZ^2(L,\F)\xrightarrow{} (L/A\otimes B\oplus B\otimes L/A)^2$ by $\delta'(f\dd',f\vv') = (f\dd'',g\dd'',f\vv'',g\vv'')$. Now consider an element $(f\dd',f\vv')\in \cB^2(L,\F)$. Then there exists a linear transformation $\varepsilon:L\xrightarrow{} \F$ such that $f\dd'(x,y) = -\varepsilon(x\dashv y)$ and $f\vv'(x,y) = -\varepsilon(x\vdash y)$ for all $x,y\in L$. For $b\in B$, however, we compute \begin{align*}
    f\dd''(x+A,b) = f\dd'(x,b) = -\varepsilon(x\dashv b) = 0, && g\dd''(b,x+A) = f\dd'(b,x) = -\varepsilon(b\dashv x) = 0, \\ f\vv''(x+A,b) = f\vv'(x,b) = -\varepsilon(x\vdash b) = 0, && g\vv''(b,x+A) = f\vv'(b,x) = -\varepsilon(b\vdash x) = 0
\end{align*} since $B\subseteq Z(L)$. Thus, $\delta'(f\dd',f\vv') = 0$ for any coboundary $(f\dd', f\vv')$, and so $\delta'$ induces a well-defined map $\delta:\cH^2(L,\F)\xrightarrow{} (L/A\otimes B\oplus B\otimes L/A)^2$ by $\delta((f\dd',f\vv')+\cB^2(L,\F)) = (f\dd'',g\dd'',f\vv'',g\vv'')$.

Now that we have established our $\delta$, it remains to show that the sequence is exact. Consider a cocycle $(f\dd,f\vv)\in \cZ^2(L/B,\F)$ and set $f\dd'(x,y) = f\dd(x+B,y+B)$ and $f\vv'(x,y) = f\vv(x+B,y+B)$. We first recall (see \cite{mainellis batten di}) that $\Inf:\cH^2(L/B,\F)\xrightarrow{} \cH^2(L,\F)$ is defined by $\Inf((f\dd,f\vv)+\cB^2(L/B,\F)) = (f\dd',f\vv') + \cB^2(L,\F)$. To show that $\ima(\Inf)\subseteq \ker \delta$, consider $(f\dd,f\vv)\in \cZ^2(L/B,\F)$. Then $(f\dd,f\vv)$ induces tuples $(f\dd',f\vv')$ and $(f\dd'',g\dd'',f\vv'',g\vv'')$, as defined previously. For $x\in L$ and $b\in B$, one computes \begin{align*}
    &f\dd''(x+A, b) = f\dd'(x,b) = f\dd(x+B,b+B) = 0, \\ & g\dd''(b,x+A) = f\dd'(b,x) = f\dd(b+B,x+B) = 0, \\ &f\vv''(x+A, b) = f\vv'(x,b) = f\vv(x+B,b+B) = 0, \\ &g\vv''(b,x+A) = f\vv'(b,x) = f\vv(b+B,x+B) = 0
\end{align*} and so $\delta(\Inf((f\dd,f\vv)+\cB^2(L/B,\F))) = \delta((f\dd',f\vv')+\cB^2(L,\F)) = (f\dd'',g\dd'',f\vv'',g\vv'') = (0,0,0,0)$. Thus, $\ima(\Inf)\subseteq \ker \delta$.

Conversely, consider a cocycle $(f\dd',f\vv')\in \cZ^2(L,\F)$ such that $(f\dd',f\vv')+\cB^2(L,\F)\in \ker \delta$. In other words, $\delta((f\dd',f\vv')+\cB^2(L,\F)) = (f\dd'',g\dd'',f\vv'',g\vv'') = (0,0,0,0)$, where $f\dd''$, $g\dd''$, $f\vv''$, and $g\vv''$ are defined using $(f\dd',f\vv')$ as above. This implies that \begin{align*}
    f\dd'(x,b) = f\dd''(x+A, b) = 0, && f\dd'(b,x) = g\dd''(b,x+A) = 0, \\
    f\vv'(x,b) = f\vv''(x+A,b) = 0, && f\vv'(b,x) = g\vv''(b,x+A) = 0
\end{align*} for all $x\in L$, $b\in B$. Define a pair $(f\dd,f\vv)$ of bilinear forms $L/B\times L/B\xrightarrow{} \F$ by $f\dd(x+B,y+B) = f\dd'(x,y)$ and $f\vv(x+B,y+B) = f\vv'(x,y)$ for any $x,y\in L$. To show that $f\dd$ and $f\vv$ are well-defined, consider elements $x+B = x_1+B$ and $y+B = y_1+B$. Then $x_1 = x+b$ and $y_1 = y+c$ for some $b,c\in B$. We compute \begin{align*}
    f\dd(x_1+B,y_1+B) &= f\dd'(x_1,y_1) \\ &= f\dd'(x+b,y+c) \\ &= f\dd'(x,y) + f\dd'(x,c) + f\dd'(b,y) + f\dd'(b,c) \\ &= f\dd'(x,y) \\ &= f\dd(x+B,y+B)
\end{align*} and, similarly, $f\vv(x_1+B,y_1+B) = f\vv(x+B,y+B)$. Moreover, $(f\dd,f\vv)$ satisfies the diassociative cocycle conditions since $(f\dd',f\vv')$ does. We thus obtain an element $(f\dd,f\vv)\in \cZ^2(L/B,\F)$ such that $\Inf((f\dd,f\vv) + \cB^2(L/B,\F)) = (f\dd',f\vv') + \cB^2(L,\F)$. Therefore, $\ker \delta\subseteq \ima(\Inf)$, and the sequence is exact.
\end{proof}

\section{Applications}
Our first application of Theorem \ref{yankosky 2.1} is an alternative proof of Theorem 5.1\footnote{Theorem 4.1 in arXiv version.} from \cite{mainellis batten di} in the case when $L$ is a nilpotent diassociative algebra. Letting $A=L'$ and $B=Z\subseteq Z(L)$, we obtain the following.

\begin{prop}\label{yankosky 3.1}
Let $L$ be a nilpotent diassociative algebra and $Z$ be a central ideal in $L$. Then \[\cH^2(L/Z,\F)\xrightarrow{\Inf} \cH^2(L,\F)\xrightarrow{\delta} (L/L'\otimes Z\oplus Z\otimes L/L')^2\] is exact.
\end{prop}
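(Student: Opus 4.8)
The plan is to invoke Theorem \ref{yankosky 2.1} directly with the specialization $A = L'$ and $B = Z$. With these choices, the ambient nilpotent algebra is $L$, the quotient appearing in the target is $L/A = L/L'$, and $B = Z$, so the output sequence of the theorem is verbatim the sequence claimed in the proposition. The entire task therefore reduces to checking that the three hypotheses of Theorem \ref{yankosky 2.1} hold for this pair $(A,B)$, after which the conclusion (existence of $\delta$ and exactness) is immediate.

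Two of the hypotheses require no work. The condition $L'\subseteq A$ holds with equality, since $A = L'$; and the condition $B\subseteq Z(L)$ is precisely the assumption that $Z$ is a central ideal of $L$. The only substantive step is the remaining hypothesis: for every cocycle $(f\dd,f\vv)\in \cZ^2(L,\F)$ one must verify the four vanishing conditions $f\dd(L',Z) = 0$, $f\dd(Z,L') = 0$, $f\vv(L',Z) = 0$, and $f\vv(Z,L') = 0$. This is where I expect the real content of the argument to lie.

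To establish these, I would write a typical element of $L' = L\lozenge L$ as a sum of products of the form $y\dashv z$ and $y\vdash z$, and then, for each of the resulting expressions (the two product types crossed with the two slots in each form $f\dd,f\vv$), apply one of the cocycle identities C1--C5 to expose a factor $b\dashv w$, $w\dashv b$, $b\vdash w$, or $w\vdash b$ with $b\in Z$. Each such factor is zero by centrality, and bilinearity of the form then kills the whole expression. For instance, C4 gives $f\dd(y\dashv z, b) = f\dd(y, z\dashv b) = 0$ and C2 gives $f\dd(y\vdash z, b) = f\vv(y, z\dashv b) = 0$, which together handle $f\dd(L',Z)$; symmetrically, C4 yields $f\dd(b, y\dashv z) = f\dd(b\dashv y, z) = 0$ while C1 reduces the $\vdash$-case to this one. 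The conditions on $f\vv$ are dispatched the same way, using C3 and C5 (e.g.\ $f\vv(y\dashv z,b) = f\vv(y\vdash z,b) = f\vv(y, z\vdash b) = 0$) and C2 for the mixed case $f\vv(b, y\dashv z) = f\dd(b\vdash y, z) = 0$.

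The main obstacle is bookkeeping rather than depth: no single cocycle identity settles all eight cases, so one must pair each product type and slot with the correct identity from C1--C5 in order to surface a central factor. Once the case analysis is laid out, each line is a one-step reduction followed by an appeal to centrality, and the hypotheses of Theorem \ref{yankosky 2.1} are verified, completing the proof.
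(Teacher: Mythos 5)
Your proposal is correct and matches the paper's proof: both specialize Theorem \ref{yankosky 2.1} to $A=L'$, $B=Z$ and verify the four vanishing hypotheses by using the cocycle identities C1--C5 to move a factor into a product with a central element, which kills it. Your case analysis is in fact more complete than the paper's, which writes out only the $f\dd(L',Z)$ computation (via C4 and C2) and asserts the rest follow similarly.
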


\begin{proof}
To invoke Theorem \ref{yankosky 2.1}, it suffices to show that $f\dd(L',Z)=0$, $f\dd(Z,L')=0$, $f\vv(L',Z)=0$, and $f\vv(Z,L')=0$ for all $(f\dd,f\vv)\in \cZ^2(L,\F)$. But this holds by the diassociative cocycle identities and their ability to associate products within the bilinear forms. For example, we compute \begin{align*}
    f\dd(L',Z) &= f\dd(L\dashv L, Z) + f\dd(L\vdash L,Z) \\ &= f\dd(L,L\dashv Z) + f\vv(L,L\dashv Z) \\ &= 0
\end{align*} via C4 and C2 respectively.
\end{proof}

The following corollary is the diassociative analogue of a result that was proved in \cite{hardy early}, and we use a similar approach to our proof.

\begin{cor}\label{yankosky 3.2}
Let $L$ be a nilpotent, finite-dimensional diassociative algebra and $Z\subseteq Z(L)\cap L'$ be an ideal such that $\dim Z = 1$. Then \[\dim \cH^2(L,\F) + 1\leq \dim \cH^2(L/Z,\F) + 4\dim(L/L').\]
\end{cor}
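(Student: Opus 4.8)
The plan is to exploit the exact sequence from Proposition~\ref{yankosky 3.1} together with the five-term sequence stated in the introduction, both specialized to the case $\dim Z = 1$, and then to extract the dimension inequality by a standard rank-nullity bookkeeping over the exact sequences. Since $Z \subseteq Z(L) \cap L'$ and $\dim Z = 1$, the two relevant exact sequences splice into a longer exact sequence running
\[
\Hom(Z,\F) \xrightarrow{\Tra} \cH^2(L/Z,\F) \xrightarrow{\Inf} \cH^2(L,\F) \xrightarrow{\delta} (L/L'\otimes Z \oplus Z\otimes L/L')^2,
\]
where exactness at $\cH^2(L/Z,\F)$ comes from the five-term sequence and exactness at $\cH^2(L,\F)$ comes from Proposition~\ref{yankosky 3.1}. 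The key numerical facts I would record first are the dimensions of the outer terms: since $\dim Z = 1$ and $\F$ is the field, $\dim \Hom(Z,\F) = 1$, and since $Z \otimes L/L'$ and $L/L' \otimes Z$ each have dimension $\dim(L/L')$, the target of $\delta$ has dimension $4\dim(L/L')$ (the superscript $2$ doubling the pair $\dim(L/L') + \dim(L/L')$).

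The main computation is then an alternating-sum argument. First I would apply exactness at $\cH^2(L/Z,\F)$ and at $\cH^2(L,\F)$ to relate the dimensions. From exactness at $\cH^2(L,\F)$,
\[
\dim \cH^2(L,\F) = \dim \ker \delta + \dim \ima \delta = \dim \ima \Inf + \dim \ima \delta.
\]
From exactness at $\cH^2(L/Z,\F)$, the image of $\Inf$ is the cokernel of $\Tra$, so $\dim \ima \Inf = \dim \cH^2(L/Z,\F) - \dim \ima \Tra$. Combining these gives
\[
\dim \cH^2(L,\F) = \dim \cH^2(L/Z,\F) - \dim \ima \Tra + \dim \ima \delta.
\]
Finally I would bound the two error terms: $\dim \ima \delta \le 4\dim(L/L')$ because $\ima \delta$ sits inside the target of dimension $4\dim(L/L')$, while $\dim \ima \Tra \ge 1$ is the step that uses the hypothesis $Z \subseteq L'$. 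The claim is that $\Tra$ is injective on $\Hom(Z,\F)$ precisely because $Z$ lies in $L'$, forcing $\dim \ima \Tra = \dim \Hom(Z,\F) = 1$; substituting these two estimates yields
\[
\dim \cH^2(L,\F) \le \dim \cH^2(L/Z,\F) - 1 + 4\dim(L/L'),
\]
which rearranges to the stated inequality.

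The hard part, and the only step that is not pure linear bookkeeping, will be establishing that $\Tra$ is injective here, i.e.\ that the map $\Res \colon \Hom(L,\F) \to \Hom(Z,\F)$ in the five-term sequence is the zero map so that $\ker \Tra = \ima \Res = 0$. This is exactly where $Z \subseteq L'$ enters: a linear functional on $L$ restricted to $Z \subseteq L'$ must vanish on $Z$, since any $z \in Z \subseteq L' = L\lozenge L$ is a sum of products and a homomorphism $L \to \F$ (with $\F$ a central module, so all products act trivially) kills every product. Thus $\Res = 0$, and by exactness $\Tra$ is injective. I would write this observation out carefully, since it is the load-bearing use of the hypotheses, and then assemble the inequality from the dimension count above, noting that the analogous Lie-theoretic bound in \cite{hardy early} is obtained by the same splicing technique.
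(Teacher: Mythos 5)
Your proposal is correct and follows essentially the same route as the paper: splice the five-term sequence with the $\delta$-extension of Proposition~\ref{yankosky 3.1}, show $\Res = 0$ because any homomorphism $L\to\F$ kills products and hence kills $Z\subseteq L'$, conclude $\Tra$ is injective so $\dim(\ima(\Tra)) = 1$, and finish with the exactness dimension count plus the bound $\dim(\ima\,\delta)\leq 4\dim(L/L')$. The only difference is organizational (you substitute $\dim(\ima(\Tra))$ into the identity rather than adding it to both sides), which is immaterial.
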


\begin{proof}
We may invoke our extended cohomological five-sequence regardless of $\dim Z$. \[0\xrightarrow{} \Hom(L/Z,\F)\xrightarrow{} \Hom(L,\F)\xrightarrow{\Res} \Hom(Z,\F)\xrightarrow{\Tra} \cH^2(L/Z,\F) ~~~~~~~~~~~~~~~~~~~~~~~~~~~~~\] \[~~~~~~~~~~~~~~~~~~~~~~~~~~~~~~~~~~~~~~~~~~~~~~\xrightarrow{\Inf} \cH^2(L,\F) \xrightarrow{\delta} (L/L'\otimes Z\oplus Z\otimes L/L')^2\] Since $Z\subseteq L'$, we obtain $\Res = 0$. This follows since $\Res$ simply restricts any homomorphism $\pi:L\xrightarrow{} \F$ to $\pi|_Z$, and any product in $\F$ is zero. By exactness, $\Tra$ is injective. Thus, $\dim(\ima(\Tra)) = 1$ since $\dim(\Hom(Z,\F)) = 1$. Also by exactness, we know that \[\dim(\ima \delta) + \dim(\ima(\Inf)) = \dim \cH^2(L,\F)\] and \[\dim(\ima(\Inf)) + \dim(\ima(\Tra)) = \dim \cH^2(L/Z,\F).\] We therefore compute \begin{align*}
    \dim \cH^2(L,\F) + 1 &= \dim \cH^2(L,\F) + \dim(\ima(\Tra)) \\ &= \dim(\ima \delta) + \dim(\ima(\Inf)) + \dim(\ima(\Tra)) \\ &= \dim(\ima \delta) + \dim \cH^2(L/Z,\F) \\ &\leq \dim((L/L'\otimes Z\oplus Z\otimes L/L')^2) + \dim \cH^2(L/Z,\F) \\ &= 4\dim(L/L') + \dim \cH^2(L/Z,\F).
\end{align*}
\end{proof}

\begin{thm}\label{yankosky 3.3}
Let $L$ be a nilpotent diassociative algebra of class $n$. Then \[\cH^2(L/L^n,\F)\xrightarrow{\Inf} \cH^2(L,\F) \xrightarrow{\delta} (L/Z_{n-1}\otimes L^n\oplus L^n\otimes Z_{n-1})^2\] is exact.
\end{thm}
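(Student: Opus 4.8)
The plan is to invoke Theorem~\ref{yankosky 2.1} with $A = Z_{n-1}$ and $B = L^n$. First I would check the structural hypotheses. Both $Z_{n-1}$ and $L^n$ are ideals of $L$; the inclusion $L'\subseteq Z_{n-1}$ is recorded in the preliminaries (it is also the case $s=1$, $i=n$ of Lemma~\ref{upper central lemma}, using $Z_n = L$); and $L^n\subseteq Z(L)$ since $L^n\lozenge L + L\lozenge L^n\subseteq L^{n+1} = 0$ by Lemma~\ref{nilpotent equivalence} and nilpotency of class $n$. Granting these, the theorem produces exactly the claimed sequence once the four cocycle-vanishing conditions are confirmed, with $L/A = L/Z_{n-1}$ and $B = L^n$.

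The heart of the proof is thus to show $f\dd(Z_{n-1},L^n) = f\dd(L^n,Z_{n-1}) = f\vv(Z_{n-1},L^n) = f\vv(L^n,Z_{n-1}) = 0$ for every $(f\dd,f\vv)\in\cZ^2(L,\F)$. Rather than treat $(Z_{n-1},L^n)$ directly, I would prove the stronger claim that all four forms vanish on $Z_j\times L^{j+1}$ and on $L^{j+1}\times Z_j$, by simultaneous induction on $j$, adopting the convention $Z_0 = 0$ (which is consistent with the recursion giving $Z_1 = Z(L)$). The base case $j=0$ is immediate. In the inductive step I exploit the flexibility of Lemma~\ref{nilpotent equivalence}: when the $L^{j+1}$-argument occupies the second slot I write $L^{j+1}=L\lozenge L^j$, and when it occupies the first slot I write $L^{j+1}=L^j\lozenge L$. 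In every case the cocycle identities C1--C5 let me shift the lone $L$-factor off the product and onto the $Z_j$-argument; because $z\in Z_j$ forces $z\dashv l,\ z\vdash l,\ l\dashv z,\ l\vdash z\in Z_{j-1}$ for all $l\in L$, each expression collapses to one of the four forms on $Z_{j-1}\times L^j$ or $L^j\times Z_{j-1}$, which vanishes by hypothesis. For instance, for $f\dd(Z_j,L^{j+1})$ with $u\in L$, $v\in L^j$, condition C4 gives $f\dd(z,u\dashv v)=f\dd(z\dashv u,v)$ and C1 gives $f\dd(z,u\vdash v)=f\dd(z,u\dashv v)$, both landing in $f\dd(Z_{j-1},L^j)=0$; the $f\vv$ cases use C2 and C5, and a first-slot product is first symmetrized by C3.

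I expect the main obstacle to be the bookkeeping of this induction, namely verifying that each of the four forms at level $j$ reduces through an \emph{admissible} cocycle identity to forms already controlled at level $j-1$, and that the two factorizations of $L^{j+1}$ are deployed correctly according to the slot. Taking $j = n-1$ then supplies the four hypotheses of Theorem~\ref{yankosky 2.1}, and the exact sequence $\cH^2(L/L^n,\F)\xrightarrow{\Inf}\cH^2(L,\F)\xrightarrow{\delta}(L/Z_{n-1}\otimes L^n\oplus L^n\otimes Z_{n-1})^2$ follows.
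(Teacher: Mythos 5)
Your proposal is correct, but it reaches the four vanishing conditions by a genuinely different route than the paper. You run a simultaneous induction on $j$, showing that all four forms vanish on $Z_j\times L^{j+1}$ and on $L^{j+1}\times Z_j$: each application of C1--C5 shifts the lone factor of $L$ onto the $Z_j$-argument, and the one-step property $Z_j\lozenge L + L\lozenge Z_j\subseteq Z_{j-1}$ closes the induction. All eight of your reductions use admissible identities and land in forms controlled at level $j-1$, so the argument is sound, and your structural checks ($L^n\subseteq Z(L)$, $L'\subseteq Z_{n-1}$) match the paper's. The paper, by contrast, performs no induction at the cocycle level: writing $L^{k+1}=L^k\lozenge L$, it applies C1 and C4 to move the \emph{entire} factor $L^k$ onto the $Z_k$-argument and then invokes Lemma~\ref{upper central lemma} with $i=s=k$, which gives $Z_k\lozenge L^k\subseteq Z_0=0$ and kills the form in one stroke. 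Your argument is essentially the Lie-style (Yankosky) proof transplanted to the diassociative setting; the paper's remark following the theorem is precisely about this contrast, noting that the Lie analogue requires induction while the diassociative cocycle identities, combined with Lemma~\ref{upper central lemma}, make it avoidable. What your route buys is independence from that lemma --- you need only the defining recursion of the upper central series --- though in effect you have folded the induction that proves Lemma~\ref{upper central lemma} into the cocycle computation itself, so the total work is comparable while the paper's version is shorter and highlights the extra strength of the diassociative identities.
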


\begin{proof}
Supposing that $L$ is nilpotent of class $n$, we first note that $L^n\subseteq Z(L)$ and $L'\subseteq Z_{n-1}$ via our preliminary discussion. To invoke Theorem \ref{yankosky 2.1}, it suffices to show that $f\dd(Z_{n-1},L^n) = f\dd(L^n,Z_{n-1}) = f\vv(Z_{n-1},L^n) = f\vv(L^n,Z_{n-1}) = 0$ for all $(f\dd,f\vv)\in \cZ^2(L,\F)$ and $n\geq 1$. For $n=1$, we have $f\dd(Z_0,L) = f\dd(0,L) = 0$ and, similarly, $f\dd(L,Z_0) = f\vv(Z_0,L) = f\vv(L,Z_0)=0$. For $k\geq 1$, we compute \begin{align*}
    f\dd(Z_k,L^{k+1}) &= f\dd(Z_k,L^k\lozenge L) \\ &= f\dd(Z_k,L^k\dashv L) + f\dd(Z_k,L^k\vdash L) \\ &= f\dd(Z_k\dashv L^k,L) + f\dd(Z_k,L^k\dashv L) \\ & \subseteq f\dd(Z_0,L) + f\dd(Z_k\dashv L^k,L) \\ & \subseteq f\dd(Z_0,L) \\ &= 0
\end{align*} via Lemma \ref{nilpotent equivalence}, C4, C1, and Lemma \ref{upper central lemma}. The other computations follow similarly, and thus the result holds by Theorem \ref{yankosky 2.1}.
\end{proof}

\begin{rem}
The Lie analogue of Theorem \ref{yankosky 3.3} relies on induction, but we note that the diassociative version is attainable without it. This reveals that, while the diassociative cocycle identities lack an anticommutative-type property, they are actually more powerful than the Lie conditions in some ways.
\end{rem}

\begin{cor}\label{yankosky 3.4}
Let $L$ be a nilpotent, finite-dimensional diassociative algebra of class $n$. Then \[\dim \cH^2(L,\F) \leq \dim \cH^2(L/L^n,\F) + 4\dim(L^n)\dim(L/Z_{n-1}) - \dim(L^n).\]
\end{cor}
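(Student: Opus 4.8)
The plan is to imitate the dimension count of Corollary~\ref{yankosky 3.2}, now anchored on the central ideal $Z = L^n$ together with the extension furnished by Theorem~\ref{yankosky 3.3}. First I would observe that $L^n$ is indeed a central ideal: nilpotency of class $n$ gives $L^{n+1}=0$, which forces $L\lozenge L^n + L^n\lozenge L = 0$, so $L^n\subseteq Z(L)$. Hence the five-term cohomological sequence applies with $Z = L^n$, and its terminal inflation map coincides with the $\Inf$ of Theorem~\ref{yankosky 3.3}. Splicing the five-term sequence onto that extension produces
\[0\xrightarrow{} \Hom(L/L^n,\F)\xrightarrow{} \Hom(L,\F)\xrightarrow{\Res} \Hom(L^n,\F)\xrightarrow{\Tra} \cH^2(L/L^n,\F)\xrightarrow{\Inf} \cH^2(L,\F)\xrightarrow{\delta} (L/Z_{n-1}\otimes L^n\oplus L^n\otimes L/Z_{n-1})^2,\]
exact at every interior node.

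Next I would harvest the relevant dimension identities from exactness. Exactly as in Corollary~\ref{yankosky 3.2}, the inclusion $L^n\subseteq L'$ (valid for $n\geq 2$) together with the fact that every homomorphism $L\to\F$ kills products yields $\Res = 0$. Exactness at $\Hom(L^n,\F)$ then makes $\Tra$ injective, so $\dim(\ima\Tra) = \dim\Hom(L^n,\F) = \dim(L^n)$. Rank--nullity applied at $\cH^2(L/L^n,\F)$, where $\ker\Inf = \ima\Tra$, gives $\dim(\ima\Inf) = \dim\cH^2(L/L^n,\F) - \dim(L^n)$, and rank--nullity at $\cH^2(L,\F)$, where $\ker\delta = \ima\Inf$, gives $\dim\cH^2(L,\F) = \dim(\ima\Inf) + \dim(\ima\delta)$.

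Finally I would bound $\dim(\ima\delta)$ by the dimension of the codomain of $\delta$, computing
\[\dim\big((L/Z_{n-1}\otimes L^n\oplus L^n\otimes L/Z_{n-1})^2\big) = 4\,\dim(L/Z_{n-1})\dim(L^n),\]
and then assemble the estimates into
\[\dim\cH^2(L,\F) = \dim(\ima\Inf) + \dim(\ima\delta) \leq \big(\dim\cH^2(L/L^n,\F) - \dim(L^n)\big) + 4\dim(L^n)\dim(L/Z_{n-1}),\]
which is the claimed inequality.

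I do not anticipate a serious obstacle, since the work is essentially the bookkeeping of Corollary~\ref{yankosky 3.2}. The one point demanding care is the source of the corrective term $-\dim(L^n)$: it arises from the injectivity of $\Tra$, not from Theorem~\ref{yankosky 3.3} in isolation, so I must use the \emph{full} five-term sequence (taken with $Z=L^n$) and confirm that its inflation map agrees with the one in Theorem~\ref{yankosky 3.3}. I would also flag the degenerate case $n=1$, in which $L$ is abelian and $L^n = L\not\subseteq L'$; there the step $\Res=0$ must be revisited, so the statement is most cleanly read for $n\geq 2$.
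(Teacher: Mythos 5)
Your proposal is correct and takes essentially the same route as the paper's proof: splice the five-term sequence onto the extension from Theorem \ref{yankosky 3.3}, argue $\Res = 0$ (so $\Tra$ is injective and $\dim(\ima\Tra) = \dim(L^n)$), and combine the exactness/rank--nullity identities with the bound $\dim(\ima\delta) \leq 4\dim(L^n)\dim(L/Z_{n-1})$. Your caveat about $n=1$ is a fair observation---the paper's own proof also tacitly assumes $n\geq 2$ (via ``$\F^n = 0$ for $n\geq 2$'')---but otherwise the two arguments coincide step for step.
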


\begin{proof}
Consider the terms in our extended sequence \[\Hom(L,\F)\xrightarrow{\Res} \Hom(L^n,\F)\xrightarrow{\Tra} \cH^2(L/L^n,\F)\xrightarrow{\Inf} \cH^2(L,\F)\xrightarrow{\delta} (L/Z_{n-1}\otimes L^n\oplus L^n\otimes L/Z_{n-1})^2\] and denote \begin{align*}
    &q= \dim \Hom(L^n,\F), \\ &r= \dim \cH^2(L/L^n,\F), \\ &s=\dim \cH^2(L,\F), \\ &t=\dim((L/Z_{n-1}\otimes L^n\oplus L^n\otimes L/Z_{n-1})^2).
\end{align*} We first note that $\F^n = 0$ for $n\geq 2$, and thus any homomorphism $f:L^n\xrightarrow{} \F^n$, as the restriction of some $f\in \Hom(L,\F)$, is the zero map. Therefore $\Res = 0$, and so $\Tra$ is injective. It follows that $q=\dim(\ima(\Tra)) = \dim(\ker(\Inf))$, which implies that $r-q = \dim \cH^2(L/L^n,\F) - \dim(\ker(\Inf)) = \dim(\ima(\Inf)) \leq s$. On the other hand, we know that $\dim \cH^2(L,\F) - \dim(\ker \delta) = \dim(\ima \delta)\leq t$, and so $s- \dim(\ker \delta) \leq t$. Finally, the equality $r-q = \dim(\ima(\Inf)) = \dim(\ker \delta)$ yields $s-(r-q) \leq t$. We thus obtain \begin{align*}
    \dim \cH^2(L,\F) & \leq \dim \cH^2(L/L^n,\F) + \dim((L/Z_{n-1}\otimes L^n\oplus L^n\otimes L/Z_{n-1})^2) - \dim \Hom(L^n,\F) \\ &= \dim \cH^2(L/L^n,\F) + 4\dim(L/Z_{n-1})\dim(L^n) - \dim(L^n)
\end{align*} from $s\leq t+r-q$.
\end{proof}

\begin{cor}\label{yankosky 3.5}
Let $L$ be a nilpotent, finite-dimensional diassociative algebra. Then \[\dim \cH^2(L,\F) \leq \dim \cH^2(L/L',\F) + \dim(L')[4\dim(L/Z(L)) - 4\dim((L/Z(L))') - 1].\]
\end{cor}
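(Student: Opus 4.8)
The plan is to prove the bound by chaining Corollary \ref{yankosky 3.4} down the lower central series of $L$. Write $Z = Z(L)$ and let $n$ be the nilpotency class of $L$. I would apply Corollary \ref{yankosky 3.4} not to $L$ itself but to each quotient $L/L^{k+1}$ for $2\leq k\leq n$. Since the lower central series strictly decreases until it vanishes (if $L^k = L^{k+1}$ then nilpotency forces $L^k = 0$, impossible for $k\leq n$ as $L^k\supseteq L^n\neq 0$), each $L/L^{k+1}$ is nilpotent of class exactly $k$, with top lower-central term $(L/L^{k+1})^k = L^k/L^{k+1}$ and quotient $(L/L^{k+1})/(L^k/L^{k+1}) = L/L^k$. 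Corollary \ref{yankosky 3.4} thus yields
\[\dim\cH^2(L/L^{k+1})\leq \dim\cH^2(L/L^k) + 4\dim(L^k/L^{k+1})\dim\big((L/L^{k+1})/Z_{k-1}\big) - \dim(L^k/L^{k+1}),\]
where $Z_{k-1}$ denotes the $(k-1)$st upper-central term of $L/L^{k+1}$. Chaining these inequalities over $k=2,\dots,n$, and using $L/L^{n+1} = L$ together with $L/L^2 = L/L'$, leaves $\dim\cH^2(L)$ on the left and $\dim\cH^2(L/L')$ plus a sum of correction terms on the right.

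The main obstacle is to replace the class-dependent factors $\dim\big((L/L^{k+1})/Z_{k-1}\big)$ by a single uniform quantity. For this I would establish the containment $Z_{k-1}(L/L^{k+1})\supseteq (L'+Z)/L^{k+1}$ for each $k\geq 2$. The image of $L'$ lands in $Z_{k-1}$ because $L/L^{k+1}$ has class $k$, whence its derived subalgebra sits inside its $(k-1)$st upper-central term by the preliminary remark that $M'\subseteq Z_{k-1}(M)$ for $M$ of class $k$; the image of $Z = Z(L)$ lands in the center $Z_1(L/L^{k+1})\subseteq Z_{k-1}(L/L^{k+1})$ (here $k\geq 2$ is used), since any $z\in Z(L)$ satisfies $z\lozenge L = L\lozenge z = 0\subseteq L^{k+1}$ and hence has central image. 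Consequently $(L/L^{k+1})/Z_{k-1}$ is a quotient of $(L/L^{k+1})/((L'+Z)/L^{k+1}) = L/(L'+Z)$, giving the uniform estimate $\dim\big((L/L^{k+1})/Z_{k-1}\big)\leq \dim(L/(L'+Z))$ for every $k$.

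Finally I would assemble the chained sum. The negative contributions collapse via $\sum_{k=2}^n \dim(L^k/L^{k+1}) = \dim(L^2/L^{n+1}) = \dim(L')$, and the positive contributions are controlled by the uniform estimate as $\sum_{k=2}^n 4\dim(L^k/L^{k+1})\dim(L/(L'+Z)) = 4\dim(L')\dim(L/(L'+Z))$. This produces $\dim\cH^2(L)\leq \dim\cH^2(L/L') + 4\dim(L')\dim(L/(L'+Z)) - \dim(L')$. To match the stated form I would rewrite $\dim(L/(L'+Z)) = \dim(L/Z) - \dim((L'+Z)/Z) = \dim(L/Z) - \dim((L/Z)')$, using $(L/Z)' = (L'+Z)/Z$, and factor out $\dim(L')$ to obtain the bracketed factor $4\dim(L/Z(L)) - 4\dim((L/Z(L))') - 1$. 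The abelian case $L'=0$ corresponds to an empty chain and gives equality, so no separate base case is required.
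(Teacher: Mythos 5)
Your proof is correct, but it takes a genuinely different route from the paper's. The paper proceeds by induction on the nilpotency class: it applies Corollary \ref{yankosky 3.4} once to $L$ itself, invokes the statement being proved (the induction hypothesis) for the class-$(n-1)$ algebra $L/L^n$, and reconciles the two bounds by noting that $A=(L/L^n)/Z(L/L^n)$ is a homomorphic image of $B=L/Z(L)$ (since $Z(L)/L^n\subseteq Z(L/L^n)$), whence $\dim(A/A')\leq\dim(B/B')$. You instead unroll that induction into an explicit telescoping chain, applying Corollary \ref{yankosky 3.4} to every quotient $L/L^{k+1}$ for $2\leq k\leq n$ and never invoking Corollary \ref{yankosky 3.5} itself. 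The extra work this organization demands --- verifying that each $L/L^{k+1}$ has class exactly $k$ (via strictness of the lower central series, which you justify correctly: two equal consecutive terms force stabilization at $0$) and establishing the uniform containment $(L'+Z(L))/L^{k+1}\subseteq Z_{k-1}(L/L^{k+1})$ --- is carried out soundly, the containment following from the preliminary fact that a class-$k$ algebra $M$ satisfies $M'\subseteq Z_{k-1}(M)$ together with the observation that central elements of $L$ stay central in quotients. Note that your uniform factor $\dim(L/(L'+Z(L)))$ is exactly the paper's $\dim(B/B')$, so both arguments dominate the correction terms by the same quantity; your containment lemma is in effect a per-quotient version of the paper's facts 3 and 5. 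What your version buys is self-containedness and transparency: the bound is visibly a sum of per-layer corrections, each controlled by one uniform factor, and the abelian case is the empty chain rather than a separate base case. What the paper's induction buys is brevity: the induction hypothesis absorbs all but the top layer, so only one application of Corollary \ref{yankosky 3.4} and one comparison of centers need to be written down.
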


\begin{proof}
We proceed by induction on the nilpotency class of $L$. As a base case, if $L$ is nilpotent of class $1$, then $L'=0$ and the result holds trivially. Suppose now that the result holds for all nilpotent diassociative algebras of class less than $n$. We note the following facts:
\begin{enumerate}
    \item $L/L^n$ is nilpotent of class $n-1$,
    \item $L^n\subseteq Z(L)$,
    \item $L'\subseteq Z_{n-1}(L)$,
    \item $(L/L^n)' = L'/L^n$,
    \item $Z(L)/L^n \subseteq Z(L/L^n)$.
\end{enumerate} Denote $A = (L/L^n)/Z(L/L^n)$ and $B= L/Z(L) = (L/L^n)/(Z(L)/L^n)$. By fact 5, $A$ is a homomorphic image of $B$, and so $\dim(A/A')\leq \dim(B/B')$. We thus have \begin{align*}
    \dim \cH^2(L/L^n,\F) &\leq \dim \cH^2((L/L^n)/(L/L^n)',\F) + \dim((L/L^n)')[4\dim(A/A') - 1] \\ &\leq \dim \cH^2(L/L',\F) + \dim(L'/L^n)[4\dim(B/B')-1]
\end{align*} by induction, and \begin{align*}
    \dim \cH^2(L,\F) \leq \dim \cH^2(L/L^n,\F) + 4\dim(L/Z_{n-1})\dim(L^n) - \dim(L^n)
\end{align*} by Corollary \ref{yankosky 3.4}. Furthermore, \begin{align*}
    \dim(L/Z_{n-1}) \leq \dim(L/(L'+Z(L))) = \dim(B/B')
\end{align*} since $L'+Z(L) \subseteq Z_{n-1}$. Combining these inequalities, we compute \begin{align*}
    \dim \cH^2(L,\F) &\leq \dim \cH^2(L/L^n,\F) + 4\dim(L/Z_{n-1})\dim(L^n) - \dim(L^n) \\ &\leq \dim \cH^2(L/L',\F) + \dim(L'/L^n)[4\dim(B/B') - 1] \\ &~~~~~~~~~~~ + 4\dim(B/B')\dim(L^n) - \dim(L^n) \\ &= \dim \cH^2(L/L',\F) + [\dim(L') - \dim(L^n)][4\dim(B/B') - 1] \\ &~~~~~~~~~~~ + 4\dim(B/B')\dim(L^n) - \dim(L^n) \\ &= \dim \cH^2(L/L',\F) + \dim(L')[4\dim(B/B') - 1]
\end{align*} which yields the desired result.
\end{proof}

Noting that $\dim(B/B')\leq \dim(L/L')$, the next corollary is an immediate consequence of the previous one. What follows is an alternative way of writing our bound on $\dim \cH^2(L,\F)$ that is based on the dimensions of $L$ and $L/L'$.

\begin{cor}\label{yankosky 3.6}
$\dim \cH^2(L,\F) \leq \dim \cH^2(L/L',\F) + \dim(L')[4\dim(L/L') - 1]$.
\end{cor}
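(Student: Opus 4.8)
The plan is to derive Corollary \ref{yankosky 3.6} as an immediate consequence of Corollary \ref{yankosky 3.5}, exactly as the connecting sentence suggests. The bound in Corollary \ref{yankosky 3.5} reads
\[\dim \cH^2(L,\F) \leq \dim \cH^2(L/L',\F) + \dim(L')[4\dim(L/Z(L)) - 4\dim((L/Z(L))') - 1],\]
so the entire task reduces to bounding the bracketed quantity above by $4\dim(L/L') - 1$. Writing $B = L/Z(L)$ as in the proof of Corollary \ref{yankosky 3.5}, the bracket is $4\dim(B/B') - 1$, and thus it suffices to establish the single inequality $\dim(B/B') \leq \dim(L/L')$.

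First I would prove this dimension inequality. Since $B = L/Z(L)$ is a homomorphic (quotient) image of $L$ under the canonical projection $\pi:L \to L/Z(L)$, and since homomorphisms carry products to products, $\pi$ sends $L'$ into $B'$, hence induces a surjection $L/L' \twoheadrightarrow B/B'$. A surjection of finite-dimensional vector spaces cannot increase dimension, so $\dim(B/B') \leq \dim(L/L')$. Concretely, one has $B' = (L/Z(L))' = (L'+Z(L))/Z(L)$, so $B/B' \cong L/(L'+Z(L))$, which is a quotient of $L/L'$; this gives the inequality directly.

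With $\dim(B/B') \leq \dim(L/L')$ in hand, I would substitute into the bound from Corollary \ref{yankosky 3.5}. Because $\dim(L') \geq 0$ and the coefficient $4\dim(B/B') - 1$ is monotone nondecreasing in $\dim(B/B')$, replacing $\dim(B/B')$ by the larger quantity $\dim(L/L')$ can only increase the right-hand side, yielding
\[\dim \cH^2(L,\F) \leq \dim \cH^2(L/L',\F) + \dim(L')[4\dim(L/L') - 1],\]
which is the claimed bound.

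This corollary presents no genuine obstacle, as it is purely a rewriting of the previous bound using a trivial dimension estimate; the only point requiring any care is the observation that multiplying through by the nonnegative factor $\dim(L')$ preserves the inequality, which is immediate. The substantive content lives entirely in Corollary \ref{yankosky 3.5}, which I would treat as already established.
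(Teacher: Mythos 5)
Your proposal is correct and matches the paper's own route: the paper likewise derives Corollary \ref{yankosky 3.6} immediately from Corollary \ref{yankosky 3.5} by noting that $\dim(B/B')\leq \dim(L/L')$ for $B = L/Z(L)$, which is exactly the surjection argument you spell out. The only difference is that you supply the (easy) details the paper leaves implicit.
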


\begin{cor}
Let $n=\dim L$ and $d=\dim(L/L')$. Then \[\dim \cH^2(L,\F) \leq -2d^2 + d + 4nd - n.\]
\end{cor}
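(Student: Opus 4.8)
The plan is to feed the bound from Corollary \ref{yankosky 3.6} the exact value of $\dim \cH^2(L/L',\F)$ and then rewrite everything in terms of $n$ and $d$. Since $(L/L')' = (L\lozenge L + L')/L' = 0$, the quotient $L/L'$ is abelian, meaning all of its products vanish. First I would compute the second cohomology of such an abelian algebra directly from the definitions, as this is the only step carrying real content.

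For the abelian algebra $L/L'$, a pair $(f\dd,f\vv)\in \cZ^2(L/L',\F)$ is just a pair of bilinear forms on a $d$-dimensional space, and each of the cocycle conditions C1--C5 becomes an identity in which every product appearing inside the bilinear forms is zero, so both sides vanish automatically. Hence $\cZ^2(L/L',\F) = \Bil(L/L')\oplus \Bil(L/L')$, which has dimension $2d^2$. On the coboundary side, any element of $\cB^2(L/L',\F)$ has the form $f\dd(x,y) = -\varepsilon(x\dashv y)$ and $f\vv(x,y) = -\varepsilon(x\vdash y)$ for some linear map $\varepsilon$; since the products vanish, this forces $\cB^2(L/L',\F) = 0$. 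Therefore $\dim \cH^2(L/L',\F) = 2d^2$.

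It then remains to substitute. Writing $\dim(L') = n - d$ and inserting $\dim \cH^2(L/L',\F) = 2d^2$ into Corollary \ref{yankosky 3.6} gives
\[\dim \cH^2(L,\F) \leq 2d^2 + (n-d)(4d - 1),\]
and expanding the right-hand side yields $-2d^2 + d + 4nd - n$, as claimed. Apart from the cohomology computation for the abelian quotient, the argument is elementary algebra, so I do not anticipate any genuine obstacle; the only point requiring care is verifying that the cocycle conditions truly collapse in the abelian case rather than imposing hidden linear constraints.
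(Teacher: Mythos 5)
Your proof is correct and takes essentially the same route as the paper: substitute $\dim \cH^2(L/L',\F) = 2d^2$ and $\dim(L') = n-d$ into Corollary \ref{yankosky 3.6} and expand. The only difference is that you verify $\dim \cH^2(L/L',\F) = 2d^2$ directly from the cocycle conditions C1--C5 (correctly --- for an abelian algebra every pair of bilinear forms is a cocycle and every coboundary vanishes), whereas the paper simply cites this maximal bound for abelian algebras from \cite{mainellis batten di}.
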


\begin{proof}
We first note that, since $L/L'$ is abelian, its multiplier $\cH^2(L/L',\F)$ has the maximal possible dimension of $2d^2$, a bound obtained in \cite{mainellis batten di}. Using Corollary \ref{yankosky 3.6}, we compute \begin{align*}
    \dim \cH^2(L,\F) &\leq 2d^2 + (n-d)[4d-1] \\ &= 2d^2+4nd - n - 4d^2 + d
\end{align*} since $\dim(L/L')=\dim L - \dim(L')$ implies that $\dim(L') = n-d$.
\end{proof}

\section{Associative Case}
We now consider the special case of associative algebras, as any associative algebra $L$ can be thought of as a diassociative algebra in which $x\dashv y = x\vdash y$. Indeed, this condition allows us to denote multiplication by $xy$ without distinction, and the axioms of the diassociative structure condense down to $x(yz) = (xy)z$. Next, consider a pair of associative algebras $A$ and $B$, and a central extension $0\xrightarrow{} A\xrightarrow{} L\xrightarrow{} B\xrightarrow{} 0$ of $A$ by $B$. A 2-cocycle $f\in \cZ^2(B,A)$ is a bilinear form $f:B\times B\xrightarrow{} A$ that satisfies $f(i,jk) = f(ij,k)$ for all $i,j,k\in B$ (see \cite{mainellis factor systems}). Continuing to follow the work of \cite{mainellis factor systems}, recall that any diassociative cocycle $(f\dd,f\vv)$ may be defined by a section $\mu:B\xrightarrow{} L_2$ of some equivalent extension. In particular, $f\dd(i,j) = \mu(i)\dashv\mu(j) - \mu(i\dashv j)$ and $f\vv(i,j) = \mu(i)\vdash\mu(j) - \mu(i\vdash j)$. In the associative case, one computes $f\dd(i,j) = f\vv(i,j)$, and we may thus think of our cocycle as a single bilinear form.

The five-term cohomological sequence is extended by $L/A\otimes B \oplus B\otimes L/A$ for the associative analogue of Theorem \ref{yankosky 2.1}, which need only require that $f(A,B) = 0$ and $f(B,A)=0$ for all cocycles $f\in \cZ^2(L,\F)$. Moreover, our $\delta$ map is defined by $\delta(f'+\cB^2(L,\F)) = (f'',g'')$, where $f'':L/A\times B\xrightarrow{} \F$ and $g'':B\times L/A\xrightarrow{} \F$. In the context of the diassociative to associative simplification, this pair would arise by computing equalities $f\dd'' = f\vv''$ and $g\dd'' = g\vv''$ via $f\dd'=f\vv'$. Similarly, our other results that extend the sequence by a term of the form $(X\otimes Y\oplus Y\otimes X)^2$ need only extend by the term $X\otimes Y \oplus Y\otimes X$ (as in the Leibniz sequences \cite{mainellis batten}). The associative analogue of Corollary \ref{yankosky 3.2} is thus the inequality \[\dim \cH^2(L,\F) + 1 \leq \dim \cH^2(L/Z,\F) + 2\dim(L/L')\] since $\dim(L/L'\otimes Z \oplus Z\otimes L/L') = 2\dim(L/L')$ in the case of $\dim Z =1$. The associative analogue of Corollary \ref{yankosky 3.4} is \[\dim \cH^2(L,\F) \leq \cH^2(L/L^n,\F) + 2\dim(L^n)\dim(L/Z_{n-1}) - \dim(L^n),\] that of Corollary \ref{yankosky 3.5} is \[\dim \cH^2(L,\F) \leq \dim \cH^2(L/L',\F) + \dim(L')[2\dim(L/Z(L)) - 2\dim((L/Z(L))') - 1],\] and that of Corollary \ref{yankosky 3.6} is \[\dim \cH^2(L,\F) \leq \dim \cH^2(L/L',\F) + \dim(L')[2\dim(L/L') - 1].\] For our last corollary, we obtain $\dim \cH^2(L,\F)\leq -d^2 + d + 2nd - n$ where $n=\dim L$ and $d=\dim(L/L')$.

\section{Example}
We denote by $M_{\As}(L)$ the multiplier of $L$ as an associative algebra, and by $M_{\Dias}(L)$ the same for $L$ as a diassociative algebra. Recall that $M_*(X) = \cH_*^2(X,\F)$, where $*$ ranges over the categories $\As$ and $\Dias$. As with the Leibniz multiplier, the dimension of the associative multiplier is bounded by $n^2$ for an algebra of dimension $n$ (by the same logic used in Lemmas 2.0.2 and 2.0.3 in \cite{rogers}). The dimension of the diassociative multiplier is bounded by $2n^2$ (see \cite{mainellis batten di}). These bounds are reached exactly when the algebra is abelian.

Let $L$ be the 2-dimensional associative algebra with basis $\{x_1,x\}$ and nonzero multiplication given solely by $x_1x_1 = x$.

\paragraph{Associative Extension.} We first compute the multiplier $M_{\As}(L)$ of $L$ as an associative algebra. Let $K$ be the cover $M\oplus L$ of $L$ with multiplications given by \begin{align*}
    &x_1x_1 = x+m_{11}, \\ & x_1x = m_{12}, \\ &xx_1 = m_{21}, \\ & xx = m_{22}.
\end{align*} To simplify, we let $x_2 = x+m_{11} = x_1x_1$, and thus multiplication in $K$ becomes \begin{align*}
    &x_1x_1 = x_2, \\ & x_1x_2 = m_{12}, \\ &x_2x_1 = m_{21}, \\ & x_2x_2 = m_{22}
\end{align*} where $M$ is generated by $m_{12},m_{21},m_{22}$ and $K$ is generated by $m_{12},m_{21},m_{22},x_1,x_2$. To find bases for our multiplier and cover, it remains to check linear relations between our generating elements. We note that any product of four or more elements in $K$ is zero, and, in particular, that $m_{22} = x_2x_2 = x_1x_1x_2 = x_1m_{12} = 0$. It thus suffices to plug our $x_1$'s into the associative identity. We compute \begin{align*}
    0 &= \As(x_1,x_1,x_1) \\ &= x_1(x_1x_1) - (x_1x_1)x_1 \\ &= x_1x_2 - x_2x_1 \\ &= m_{12} - m_{21}
\end{align*} which implies that $m_{12} = m_{21}$. We let $m_{12}\neq 0$ to obtain the maximal possible dimension of our defining pair, and thus $\{m_{12}\}$ forms a basis for $M = M_{\As}(L)$. In other words, $\dim M_{\As}(L) = 1$.

We now verify that the inequality \[\dim \cH^2(L,\F) + 1 \leq \dim \cH^2(L/Z,\F) + 2\dim(L/L')\] holds. Let $Z= \langle x\rangle$, noting that $Z \subseteq Z(L)\cap L'$ and $\dim Z = 1$. Since $L/Z$ is abelian, we know that $\dim M_{\As}(L/Z) = \dim(L/Z)^2 = 1$. Moreover, $\dim(L/L') = 1$, and thus the inequality is computed as $1 + 1 \leq 1 + 2(1)$, or $2\leq 3$. We can also check \[\dim \cH^2(L,\F) \leq \dim \cH^2(L/L',\F) + 2\dim(L')\dim(L/Z(L)) - \dim(L')\] for the associative analogue of Corollary \ref{yankosky 3.4}, since $L$ is nilpotent of class 2. Since $L' = Z(L) = \langle x\rangle$, we have $\dim M_{\As}(L/L') = (1)^2 = 1$ and $\dim(L') = \dim(L/Z(L)) = 1$. The inequality thus becomes $1\leq 1+2(1)(1) - 1$, or $1\leq 2$.

\paragraph{Diassociative Extension.} Our algebra $L$ can be thought of as the diassociative algebra with basis $\{ x_1,x\}$ and nonzero multiplications given solely by $x_1\dashv x_1 = x = x_1\vdash x_1$. Let $K$ be the cover $M\oplus L$ of $L$ with multiplications denoted by \begin{align*}
    &x_1\dashv x_1 = x+m_{11} && x_1\vdash x_1 = x+s_{11} \\ &x_1\dashv x = m_{12} && x_1\vdash x = s_{12} \\ &x\dashv x_1 = m_{21} && x\vdash x_1 = s_{21} \\ & x\dashv x = m_{22} && x\vdash x = s_{22}.
\end{align*} Letting $x_2 = x_1\dashv x_1 = x+m_{11}$, we obtain $x_1\vdash x_1 = x+s_{11} = x_2 - m_{11} + s_{11} = x_2+m$ for some $m\in M$. Thus, multiplication in $K$ is given by \begin{align*}
    &x_1\dashv x_1 = x_2 && x_1\vdash x_1 = x_2+m \\ &x_i\dashv x_j = m_{ij} && x_i\vdash x_j = s_{ij}
\end{align*} for $(i,j) \neq (1,1)$. Now $M$ is generated by $m,m_{12},m_{21},m_{22},s_{12},s_{21},s_{22}$. As in the associative case, we need to verify linear relations in $K$ based on the five axioms of diassociative algebras. Noting that any four-product is zero, we get $m_{22} = s_{22} = 0$, and it remains to plug $x_1$'s into the diassociative identities. We compute
\begin{align*}
    0&= \As\dd(x_1,x_1,x_1) \\ &= x_1\dashv(x_1\dashv x_1) - (x_1\dashv x_1)\dashv x_1 \\ &= x_1\dashv x_2 - x_2\dashv x_1 \\ &= m_{12} - m_{21}
\end{align*} and \begin{align*}
    0&= \As\vv(x_1,x_1,x_1) \\ &= x_1\vdash(x_1\vdash x_1) - (x_1\vdash x_1)\vdash x_1 \\ &= x_1\vdash (x_2+m) - (x_2+m)\vdash x_1 \\ &= s_{12} - s_{21}
\end{align*} which yields $m_{12} = m_{21}$ and $s_{12} = s_{21}$. In a similar fashion, axioms (1) and (3) yield these same equalities respectively. Finally, axiom (2) yields $m_{21} = s_{12}$, and thus $\{m,m_{12}\}$ forms a maximal basis for $M$. Therefore, $\dim M_{\Dias}(L) = 2$, which is notably different from $M_{\As}(L)$.

To verify Corollary \ref{yankosky 3.2}, let $Z= \langle x\rangle$, which is again 1-dimensional and falls in $Z(L)\cap L'$. Since $L/Z$ is abelian, we have $\dim M_{\Dias}(L/Z) = 2\dim(L/Z)^2 =2(1)^2 = 2$. Our inequality \[\dim \cH^2(L,\F) + 1 \leq \dim \cH^2(L/Z,\F) + 4\dim(L/L')\] is thus satisfied, with $2+1 \leq 2 + 4(1)$, or $3\leq 6$. For Corollary \ref{yankosky 3.4}, we want \[\dim \cH^2(L,\F) \leq \dim \cH^2(L/L',\F) + 4\dim(L')\dim(L/Z(L)) - \dim(L')\] since $L$ is nilpotent of class 2. Since $L' = Z(L) = \langle x\rangle$, we have $\dim M_{\Dias}(L/L') = 2(1)^2 = 2$ and $\dim(L') = \dim(L/Z(L)) = 1$. The desired inequality is thus $2\leq 2+4(1)(1) - 1$, or $2\leq 5$.

\section*{Acknowledgements}
The author would like to thank Ernest Stitzinger for the many helpful discussions.

\end{document}